\documentclass[reqno]{amsart}
\usepackage{amssymb}
\usepackage{hyperref}

\begin{document}
\title[ The $(p,q)$-Analogues of Some Inequalities for the Digamma Function]
{The $(p,q)$-Analogues of Some Inequalities for the Digamma Function}

\author[Kwara Nantomah]
{Kwara Nantomah}

\address{Kwara Nantomah \newline
Department of Mathematics, University for Development Studies, Navrongo Campus, P. O. Box 24, Navrongo, UE/R, Ghana.}
\email{mykwarasoft@yahoo.com, knantomah@uds.edu.gh}

\subjclass[2000]{33B15, 26A48.}
\keywords{digamma function, $(p,q)$-analogue,  Inequality.}

\begin{abstract}
In this paper, we present the $(p,q)$-analogues of some inequalities concerning the digamma function. Our results generalize some earlier results.
\end{abstract}

\maketitle
\numberwithin{equation}{section}
\newtheorem{theorem}{Theorem}[section]
\newtheorem{lemma}[theorem]{Lemma}
\newtheorem{proposition}[theorem]{Proposition}
\newtheorem{corollary}[theorem]{Corollary}
\newtheorem*{remark}{Remark}

\section{Introduction and Preliminaries}
\noindent
The classical Euler's Gamma function, $\Gamma(t)$ and the digamma function, $\psi(t)$ are commonly defined as
\begin{equation*}\label{eqn:gamma-and-digamma}
\Gamma(t)=\int_0^\infty e^{-x}x^{t-1}\,dx  \quad \text{and} \quad \psi(t)=\frac{d}{dt}\ln \Gamma(t) =\frac{\Gamma'(t)}{\Gamma(t)}, \quad t>0.
\end{equation*}

\noindent
The $p$-analogues of the Gamma and digamma functions are respectively defined as follows. 
\begin{equation*}\label{eqn:p-gamma-and-p-digamma}
\Gamma_p(t)=\frac{p!p^t}{t(t+1) \dots (t+p)} \quad \text{and} \quad \psi_p(t)=\frac{d}{dt}\ln \Gamma_p(t) =\frac{\Gamma_p'(t)}{\Gamma_p(t)}, \quad t>0.
\end{equation*}
where \,  $\lim_{p\rightarrow \infty}\Gamma_p(t)=\Gamma(t)$ \,  and  \,   $\lim_{p\rightarrow  \infty}\psi_p(t)=\psi(t)$. For some more insights and properties of these functions, see  \cite{Krasniqi-Mansour-Shabani-2010},  \cite{Krasniqi-Shabani-2010} and the references therein.\\

\noindent
Similarly,  the $q$-analogues of the Gamma and digamma  functions are respectively defined for $q\in(0,1)$ as (see also   \cite{Krasniqi-Mansour-Shabani-2010} and  \cite{Krasniqi-Shabani-2010})
\begin{equation*}\label{eqn:q-gamma-and-q-digamma}
\Gamma_q(t)=(1-q)^{1-t}\prod_{n=1}^{\infty}\frac{1-q^n}{1-q^{t+n}} \quad \text{and} \quad \psi_q(t)=\frac{d}{dt}\ln \Gamma_q(t) =\frac{\Gamma_q'(t)}{\Gamma_q(t)}, \quad t>0.
\end{equation*}
where \,  $\lim_{q\rightarrow 1^-}\Gamma_q(t)=\Gamma(t)$ \,  and  \,   $\lim_{q\rightarrow 1^-}\psi_q(t)=\psi(t)$.\\

\noindent
In 2012, Krasniqi and Merovci \cite{Krasniqu-Merovci-2012} defined the $(p,q)$-analogue of the Gamma function, $\Gamma_{p,q}(t)$ as
\begin{equation*}\label{eqn:(p,q)-gamma}
\Gamma_{p,q}(t)=\frac{[p]_{q}^{t}[p]_{q}!}{[t]_{q}[t+1]_{q}\dots[t+p]_{q}} ,\quad t>0, \quad p\in N, \quad q\in(0,1).
\end{equation*}
where \, $[p]_{q}=\frac{1-q^p}{1-q}$.\,  For several properties and characteristics of this function, we refer to \cite{Krasniqi-Srivastava-Dragomir-2013} \\

\noindent
Similarly, the $(p,q)$-analogue of the digamma function $\psi_{p,q}(t)$  is defined as
\begin{equation*}\label{eqn:(p,q)-digamma}
\psi_{p,q}(t)=\frac{d}{dt}\ln \Gamma_{p,q}(t) =\frac{\Gamma'_{p,q}(t)}{\Gamma_{p,q}(t)}, \quad t>0, \quad p\in N, \quad q\in(0,1).
\end{equation*}

\noindent
The functions $\psi(t)$ and  $\psi_{p,q}(t)$  as defined above have the following series representations.
\begin{align*}
\psi(t) &=-\gamma + (t-1) \sum_{n=0}^{\infty}\frac{1}{(1+n)(n+t)},\quad t>0\\
\psi_{p,q}(t)&= \ln[p]_q  + (\ln q)\sum_{n=1}^{p}\frac{q^{nt}}{1-q^{n}}, \qquad t>0. 
\end{align*}
where $\gamma$ is the Euler-Mascheroni's constant.\\

\noindent
By taking the $m$-th derivative of these functions, it can easily be shown that the following statements are valid for $m\in N$.

\begin{align*}
\psi^{(m)}(t) &= (-1)^{m+1}m! \sum_{n=0}^{\infty}\frac{1}{(n+t)^{m+1}},\quad t>0 \\
\psi_{p,q}^{(m)}(t)&= (\ln q)^{m+1}\sum_{n=1}^{p}\frac{n^{m}q^{nt}}{1-q^{n}},\quad t>0. 
\end{align*}

\noindent
In 2011, Sulaiman \cite{Sulaiman-2011}  presented the following results.
\begin{equation}\label{eqn:digamma-ineq}
\psi(s+t)\geq \psi(s) + \psi(t) 
\end{equation}
for $t>0$ and  $0<s<1$.

\begin{equation}\label{eqn:polygamma-ineq-1}
\psi^{(m)}(s+t)\leq \psi^{(m)}(s) + \psi^{(m)}(t) 
\end{equation}
for $s,t>0$ and for a positive odd integer $m$.

\begin{equation}\label{eqn:polygamma-ineq-2}
\psi^{(m)}(s+t)\geq \psi^{(m)}(s) + \psi^{(m)}(t) 
\end{equation}
for $s,t>0$ and for a positive even integer $m$.

\begin{equation}\label{eqn:polygamma-ineq-3}
\psi^{(m)}(s)\psi^{(m)}(t)\geq \left[ \psi^{(m)}(s+t) \right]^2
\end{equation}
for $s,t>0$ and for a positive odd integer $m$.\\

\noindent
Prior to Sulaiman's results, Mansour and Shabani  by using different techniques established similar  inequalities for the function $\psi_q(t)$. These can be found in \cite{Mansour-Shabani-2009}.\\

\noindent
Our objective in this paper is to establish that the inequalities  ~(\ref{eqn:digamma-ineq}), ~(\ref{eqn:polygamma-ineq-1}), ~(\ref{eqn:polygamma-ineq-2}) and ~(\ref{eqn:polygamma-ineq-3}) still hold true for the $(p,q)$-analogue of the digamma function. 


\section{Main Results}
\noindent
We now present the results of this paper.


\begin{theorem}\label{thm:(p,q)-digamma-ineq-1}
Let $t>0$, $0<s\leq1$,  $ q\in(0,1)$ and $p\in N$. Then the following inequality is valid.
\begin{equation}\label{eqn:(p,q)-digamma-ineq-1}
\psi_{p,q}(s+t)\geq \psi_{p,q}(s) + \psi_{p,q}(t).
\end{equation}
\end{theorem}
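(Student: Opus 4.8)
The plan is to collapse the two-variable inequality into a one-variable monotonicity argument followed by a single boundary estimate. First I would fix $t>0$ and study
\[
g(s) = \psi_{p,q}(s+t) - \psi_{p,q}(s) - \psi_{p,q}(t)
\]
as a function of $s$ on $(0,1]$, aiming to show that $g$ is decreasing. Differentiating and using the series for $\psi_{p,q}'$ (the case $m=1$ of the polygamma series) gives
\[
g'(s) = \psi_{p,q}'(s+t) - \psi_{p,q}'(s) = (\ln q)^2 \sum_{n=1}^{p} \frac{n\,q^{ns}\left(q^{nt}-1\right)}{1-q^{n}}.
\]
Since $q\in(0,1)$ and $t>0$ force $q^{nt}<1$ while $(\ln q)^2$, $n$, $q^{ns}$ and $1-q^{n}$ are all positive, every summand is negative, so $g'(s)<0$. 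Hence $g$ is strictly decreasing on $(0,1]$, and it suffices to prove $g(1)\geq 0$.

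For the boundary value I would first record the one-step relation that drops straight out of the series,
\[
\psi_{p,q}(1+t) - \psi_{p,q}(t) = (\ln q)\sum_{n=1}^{p} \frac{q^{nt}\left(q^{n}-1\right)}{1-q^{n}} = -(\ln q)\sum_{n=1}^{p} q^{nt},
\]
where the factor $q^{n}-1$ cancels the denominator. Substituting this together with the explicit value $\psi_{p,q}(1)=\ln[p]_q + (\ln q)\sum_{n=1}^{p}\frac{q^{n}}{1-q^{n}}$ rearranges $g(1)$ into
\[
g(1) = -(\ln q)\sum_{n=1}^{p} q^{nt} + \left[ -(\ln q)\sum_{n=1}^{p}\frac{q^{n}}{1-q^{n}} - \ln[p]_q \right].
\]
Because $-\ln q>0$ and $t>0$, the first sum is strictly positive, so the whole problem reduces to the single $t$-free inequality
\[
-(\ln q)\sum_{n=1}^{p}\frac{q^{n}}{1-q^{n}} \geq \ln[p]_q .
\]

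I expect this last estimate to be the main obstacle, precisely because $\ln[p]_q$ carries no $t$-dependence and must be dominated uniformly by the sum. My plan to handle it is to telescope
\[
\ln[p]_q = \ln\frac{1-q^{p}}{1-q} = \sum_{n=1}^{p-1}\ln\frac{1-q^{n+1}}{1-q^{n}},
\]
then bound each term by $\ln(1+x)\leq x$ applied to $\frac{1-q^{n+1}}{1-q^{n}} = 1 + \frac{q^{n}(1-q)}{1-q^{n}}$, which yields $\ln[p]_q \leq (1-q)\sum_{n=1}^{p-1}\frac{q^{n}}{1-q^{n}} \leq (1-q)\sum_{n=1}^{p}\frac{q^{n}}{1-q^{n}}$. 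Finally the elementary inequality $-\ln q \geq 1-q$ on $(0,1)$ upgrades the factor $1-q$ to $-\ln q$ and closes the chain, giving $g(1)>0$ and hence the theorem. The only bookkeeping subtlety is the mismatch between the sums running to $p-1$ and to $p$, which is harmless since the extra term is positive.
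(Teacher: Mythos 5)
Your proposal is correct, and it takes a genuinely different route from the paper's, which is worth noting. The paper fixes $s$ and treats $\mu(t)=\psi_{p,q}(s+t)-\psi_{p,q}(s)-\psi_{p,q}(t)$ as a function of $t$: it shows $\mu'(t)\le 0$ and then computes $\lim_{t\to\infty}\mu(t)=-\ln[p]_q-(\ln q)\sum_{n=1}^{p}\frac{q^{ns}}{1-q^{n}}$, asserting without further argument that this quantity is nonnegative. You instead fix $t$, differentiate in $s$ on $(0,1]$, and push everything to the finite boundary point $s=1$, where the one-step identity $\psi_{p,q}(1+t)-\psi_{p,q}(t)=-(\ln q)\sum_{n=1}^{p}q^{nt}$ isolates a strictly positive term and reduces the theorem to the $t$-free estimate $-(\ln q)\sum_{n=1}^{p}\frac{q^{n}}{1-q^{n}}\ge\ln[p]_q$, which you then prove by telescoping $\ln[p]_q$ and using $\ln(1+x)\le x$ together with $-\ln q\ge 1-q$. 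That estimate is in fact exactly what is needed to justify the paper's unproved sign claim as well (since $q^{ns}\ge q^{n}$ for $0<s\le 1$, the paper's limit is bounded below by the same difference), so your argument not only reaches the conclusion by a different monotonicity direction and boundary point but also supplies the justification for the one step the paper leaves as a bare assertion. What the paper's route buys is brevity and structural uniformity with its Theorems 2.2--2.4; what yours buys is a finite rather than asymptotic boundary evaluation and a complete proof of the key inequality on which both arguments ultimately rest.
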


\begin{proof}
Let $\mu(t)=\psi_{p,q}(s+t) - \psi_{p,q}(s) - \psi_{p,q}(t)$. Then fixing $s$ we have,
\begin{align*}
\mu'(t)=\psi'_{p,q}(s+t) - \psi'_{p,q}(t)&=  (\ln q)^{2}  \sum_{n=1}^{p} \left[ \frac{ nq^{n(s+t)}}{1-q^{n}} - \frac{ nq^{nt}}{1-q^{n}} \right] \\
&=   (\ln q)^{2}  \sum_{n=1}^{p} \frac{ nq^{nt}(q^{ns}-1)}{1-q^{n}} \leq0.
\end{align*}
That implies $\mu$ is non-increasing. Furthermore,

\begin{align*}
\lim_{t \rightarrow \infty}\mu(t)&= \lim_{t \rightarrow \infty} \left[\psi_{p,q}(s+t) - \psi_{p,q}(s) - \psi_{p,q}(t) \right] \\
&= - \ln[p]_q +(\ln q)\lim_{t \rightarrow \infty}\sum_{n=1}^{p} \left[  \frac{ q^{n(s+t)}}{1-q^{n}} - 
\frac{ q^{ns}}{1-q^{n}} -  \frac{ q^{nt}}{1-q^{n}}    \right] \\
&= - \ln[p]_q +(\ln q)\lim_{t \rightarrow \infty}\sum_{n=1}^{p} \left[ \frac{ q^{ns}.q^{nt}-q^{ns}-q^{nt}}{1-q^{n}}\right]\\ 
&= - \ln[p]_q  - (\ln q) \sum_{n=1}^{p} \frac{ q^{ns}}{1-q^{n}}\geq0.
\end{align*}
Therefore $\mu(t)\geq0$ concluding the proof.
\end{proof}

\begin{theorem}\label{thm:(p,q)-polygamma-ineq-1}
Let $s, t>0$,  $ q\in(0,1)$ and $p \in N$. Suppose that $m$ is a positive odd integer, then the following inequality is valid.
\begin{equation}\label{eqn:(p,q)-polygamma-ineq-1}
\psi^{(m)}_{p,q}(s+t)\leq \psi^{(m)}_{p,q}(s) + \psi^{(m)}_{p,q}(t) .
\end{equation}
\end{theorem}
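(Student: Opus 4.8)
The plan is to adapt the monotonicity argument used in Theorem~\ref{thm:(p,q)-digamma-ineq-1}, reversing the direction of the inequality by carefully tracking the parity of $m$. Fixing $s$, I would set
\begin{equation*}
\mu(t) = \psi^{(m)}_{p,q}(s+t) - \psi^{(m)}_{p,q}(s) - \psi^{(m)}_{p,q}(t),
\end{equation*}
and aim to show $\mu(t) \leq 0$ for all $t>0$, which is exactly the claimed inequality.

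First I would differentiate, applying the series representation for $\psi^{(m)}_{p,q}$ with index $m+1$, to obtain
\begin{equation*}
\mu'(t) = \psi^{(m+1)}_{p,q}(s+t) - \psi^{(m+1)}_{p,q}(t) = (\ln q)^{m+2} \sum_{n=1}^{p} \frac{n^{m+1} q^{nt}(q^{ns}-1)}{1-q^n}.
\end{equation*}
The key sign analysis is this: since $m$ is odd, $m+2$ is odd, so $(\ln q)^{m+2}<0$; meanwhile $q^{ns}-1<0$ while the remaining factors $n^{m+1}q^{nt}/(1-q^n)$ are positive. Hence every summand is nonnegative and $\mu'(t)\geq 0$, so $\mu$ is \emph{non-decreasing} on $(0,\infty)$. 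This reverses the situation of Theorem~\ref{thm:(p,q)-digamma-ineq-1}, and the reversal is driven entirely by the parity of $m$.

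Next I would evaluate the limit at infinity. Because $q\in(0,1)$, we have $q^{nt}\to 0$ as $t\to\infty$, so both $\psi^{(m)}_{p,q}(s+t)$ and $\psi^{(m)}_{p,q}(t)$ vanish, leaving
\begin{equation*}
\lim_{t \to \infty} \mu(t) = -\psi^{(m)}_{p,q}(s).
\end{equation*}
Since $m$ is odd, $m+1$ is even, so $(\ln q)^{m+1}>0$; as every term in the defining sum is then positive, $\psi^{(m)}_{p,q}(s)>0$, whence $\lim_{t\to\infty}\mu(t)\leq 0$. Combining the two facts, namely that $\mu$ is non-decreasing with a nonpositive limit at infinity, yields $\mu(t)\leq \lim_{t\to\infty}\mu(t)\leq 0$ for every $t>0$, which is the desired conclusion.

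The computations are routine; the only point demanding care is the bookkeeping of signs across the two parity shifts, that is $(\ln q)^{m+1}$ governing the function itself versus $(\ln q)^{m+2}$ governing its derivative. I would state explicitly at each step why $m$ odd forces the particular signs, since a slip there would flip the inequality and break the argument.
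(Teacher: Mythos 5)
Your proposal is correct and follows essentially the same route as the paper: define the difference function, show it is non-decreasing via the sign of $(\ln q)^{m+2}\sum_{n=1}^{p} n^{m+1}q^{nt}(q^{ns}-1)/(1-q^{n})$, and conclude from the nonpositive limit $-\psi^{(m)}_{p,q}(s)$ at infinity. Your sign bookkeeping is in fact spelled out more explicitly than in the paper, but the argument is the same.
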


\begin{proof}
Let $\eta(t)=\psi^{(m)}_{p,q}(s+t) - \psi^{(m)}_{p,q}(s) - \psi^{(m)}_{p,q}(t)$. Then fixing $s$ we have,
\begin{align*}
\eta'(t)&=\psi^{(m+1)}_{p,q}(s+t) - \psi^{(m+1)}_{p,q}(t)\\
&=  (\ln q)^{m+2}  \sum_{n=1}^{p} \left[ \frac{ n^{m+1}q^{n(s+t)}}{1-q^{n}} - \frac{ n^{m+1}q^{nt}}{1-q^{n}} \right] \\
&=  (\ln q)^{m+2}  \sum_{n=1}^{p} \left[ \frac{ n^{m+1}q^{nt}(q^{ns}-1)}{1-q^{n}} \right]\geq0. \,\, \text{(since $m$ is odd)} 
\end{align*}
\noindent
That implies $\eta$ is non-decreasing. Furthermore,

\begin{align*}
\lim_{t \rightarrow \infty}\eta(t)&= (\ln q)^{m+1}\lim_{t \rightarrow \infty}\sum_{n=1}^{p}  \left[\frac{ n^{m}q^{n(s+t)}}{1-q^{n}} - \frac{ n^{m}q^{ns}}{1-q^{n}} - \frac{ n^{m}q^{nt}}{1-q^{n}} \right] \\
&= (\ln q)^{m+1}\lim_{t \rightarrow \infty}\sum_{n=1}^{p}  \left[\frac{ n^{m}q^{ns}.q^{nt}}{1-q^{n}} - \frac{ n^{m}q^{ns}}{1-q^{n}} - \frac{ n^{m}q^{nt}}{1-q^{n}} \right] \\
&= -(\ln q)^{m+1} \sum_{n=1}^{p}\frac{ n^{m}q^{ns}}{1-q^{n}}\leq0. \, \, \text{(since $m$ is odd)}
\end{align*}
Therefore $\eta(t)\leq0$ concluding the proof.
\end{proof}

\begin{theorem}\label{thm:(p,q)-polygamma-ineq-2}
Let $s, t>0$,  $ q\in(0,1)$ and $p \in N$. Suppose that $m$ is a positive even integer, then the following inequality is valid.
\begin{equation}\label{eqn:(p,q)-polygamma-ineq-2}
\psi^{(m)}_{p,q}(s+t)\geq \psi^{(m)}_{p,q}(s) + \psi^{(m)}_{p,q}(t) .
\end{equation}
\end{theorem}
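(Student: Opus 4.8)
The plan is to follow the proof of Theorem~\ref{thm:(p,q)-polygamma-ineq-1} essentially verbatim, exploiting the fact that reversing the parity of $m$ reverses every sign that occurred there. First I would fix $s$ and set $\eta(t)=\psi^{(m)}_{p,q}(s+t) - \psi^{(m)}_{p,q}(s) - \psi^{(m)}_{p,q}(t)$, reducing the claim to showing $\eta(t)\geq0$ for all $t>0$.

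Next I would differentiate, using the series representation $\psi_{p,q}^{(m)}(t)=(\ln q)^{m+1}\sum_{n=1}^{p} n^{m} q^{nt}/(1-q^{n})$, to obtain
\[
\eta'(t)=\psi^{(m+1)}_{p,q}(s+t) - \psi^{(m+1)}_{p,q}(t) = (\ln q)^{m+2}\sum_{n=1}^{p}\frac{n^{m+1}q^{nt}(q^{ns}-1)}{1-q^{n}}.
\]
This is the only place where the argument genuinely differs from the odd case. Since $m$ is even, $m+2$ is even, so $(\ln q)^{m+2}>0$; combined with $1-q^{n}>0$, $q^{nt}>0$ and $q^{ns}-1\leq0$ (because $0<q<1$ and $s>0$), this forces $\eta'(t)\leq0$, so that $\eta$ is now \emph{non-increasing} rather than non-decreasing.

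It then remains to evaluate the limit at infinity. Since $q\in(0,1)$ gives $q^{nt}\to0$ and $q^{n(s+t)}\to0$ as $t\to\infty$, the first and third terms of $\eta$ vanish, leaving
\[
\lim_{t\to\infty}\eta(t) = -(\ln q)^{m+1}\sum_{n=1}^{p}\frac{n^{m}q^{ns}}{1-q^{n}}.
\]
Because $m$ is even, $m+1$ is odd and $(\ln q)^{m+1}<0$, so this limit is $\geq0$. A non-increasing function whose value at $+\infty$ is non-negative dominates that limit on all of $(0,\infty)$, hence $\eta(t)\geq \lim_{t\to\infty}\eta(t)\geq 0$, which is the desired inequality.

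I expect no substantive obstacle: the whole proof is a sign-tracking exercise, and the only thing to get right is the bookkeeping of parities, namely that the even power $(\ln q)^{m+2}$ is positive in $\eta'$ while the odd power $(\ln q)^{m+1}$ is negative in the limit --- exactly the reverse of what happened in Theorem~\ref{thm:(p,q)-polygamma-ineq-1}. The one point worth double-checking is the direction of the final monotonicity argument: here $\eta$ \emph{decreases} to a non-negative limit, so $\eta\geq0$ throughout, whereas in the odd case $\eta$ increased to a non-positive limit and one concluded $\eta\leq0$.
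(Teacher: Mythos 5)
Your proposal is correct and follows essentially the same route as the paper: differentiate $\eta$ using the series representation, show $\eta'\leq0$ from the parity of $m$, compute the limit at infinity, and conclude $\eta\geq0$. In fact your sign bookkeeping is slightly more careful than the paper's, which after establishing $\lambda'(t)\leq0$ mistakenly writes that $\lambda$ is ``non-decreasing'' when the argument (as you correctly state) requires it to be non-increasing with a non-negative limit at infinity.
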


\begin{proof}
Let $\lambda(t)=\psi^{(m)}_{p,q}(s+t) - \psi^{(m)}_{p,q}(s) - \psi^{(m)}_{p,q}(t)$. Then fixing $s$ we have,
\begin{align*}
\lambda'(t)&=\psi^{(m+1)}_{p,q}(s+t) - \psi^{(m+1)}_{p,q}(t)\\
&=  (\ln q)^{m+2}  \sum_{n=1}^{p} \left[ \frac{ n^{m+1}q^{n(s+t)}}{1-q^{n}} - \frac{ n^{m+1}q^{nt}}{1-q^{n}} \right] \\
&=  (\ln q)^{m+2}  \sum_{n=1}^{p} \left[ \frac{ n^{m+1}q^{nt}(q^{ns}-1)}{1-q^{n}} \right]\leq0. \,\, \text{(since $m$ is even)} 
\end{align*}
\noindent
That implies $\lambda$ is non-decreasing. Furthermore,

\begin{align*}
\lim_{t \rightarrow \infty}\lambda(t)&= (\ln q)^{m+1}\lim_{t \rightarrow \infty}\sum_{n=1}^{p}  \left[\frac{ n^{m}q^{n(s+t)}}{1-q^{n}} - \frac{ n^{m}q^{ns}}{1-q^{n}} - \frac{ n^{m}q^{nt}}{1-q^{n}} \right] \\
&= -(\ln q)^{m+1} \sum_{n=1}^{p}\frac{ n^{m}q^{ns}}{1-q^{n}}\geq0. \, \, \text{(since $m$ is even)}
\end{align*}
Therefore $\lambda(t)\geq0$ concluding the proof.
\end{proof}

\begin{theorem}\label{thm:(p,q)-polygamma-ineq-3}
Let $s, t>0$,  $ q\in(0,1)$ and $p\in N$. Suppose $m$ is a positive odd integer, then the following inequality holds true.
\begin{equation}\label{eqn:(p,q)-polygamma-ineq-3}
\psi_{p,q}^{(m)}(s)\psi_{p,q}^{(m)}(t)\geq \left[ \psi_{p,q}^{(m)}(s+t) \right]^2
\end{equation}
\end{theorem}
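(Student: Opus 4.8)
The plan is to reduce the claimed inequality to a single application of the Cauchy--Schwarz inequality to the defining finite sums. Write
\[
S(x)=\sum_{n=1}^{p}\frac{n^{m}q^{nx}}{1-q^{n}},
\]
so that $\psi_{p,q}^{(m)}(x)=(\ln q)^{m+1}S(x)$. For $q\in(0,1)$, $n\ge 1$ and $x>0$ every summand is strictly positive, whence $S(x)>0$. Both sides of \eqref{eqn:(p,q)-polygamma-ineq-3} carry the common factor $(\ln q)^{2(m+1)}$, which is positive because the exponent $2(m+1)$ is even. Hence, after cancelling this factor, the theorem is equivalent to the purely algebraic inequality $S(s)\,S(t)\ge S(s+t)^{2}$.

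First I would rewrite the middle term, using $q^{n(s+t)}=q^{ns}q^{nt}$, so as to split it into a product of two factors, one adapted to $s$ and one to $t$:
\[
S(s+t)=\sum_{n=1}^{p}\left(\frac{n^{m/2}q^{ns}}{\sqrt{1-q^{n}}}\right)\left(\frac{n^{m/2}q^{nt}}{\sqrt{1-q^{n}}}\right).
\]
Applying the Cauchy--Schwarz inequality to this sum then yields
\[
S(s+t)^{2}\le\left(\sum_{n=1}^{p}\frac{n^{m}q^{2ns}}{1-q^{n}}\right)\left(\sum_{n=1}^{p}\frac{n^{m}q^{2nt}}{1-q^{n}}\right).
\]

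The point that needs care, and the place where the constraint $q\in(0,1)$ is used in an essential way, is that this bound produces the exponent $2ns$ rather than $ns$, so it does not yet match $S(s)$. To close the gap I would observe that $0<q^{ns}<1$ forces $q^{2ns}=(q^{ns})^{2}\le q^{ns}$; since each coefficient $\frac{n^{m}}{1-q^{n}}$ is positive, this gives $\sum_{n=1}^{p}\frac{n^{m}q^{2ns}}{1-q^{n}}\le S(s)$, and likewise with $t$ in place of $s$. Substituting these two estimates into the Cauchy--Schwarz bound produces $S(s+t)^{2}\le S(s)S(t)$, which is exactly the reduced inequality and hence completes the proof.

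I expect the main obstacle to be choosing the Cauchy--Schwarz splitting correctly. The more symmetric choice $\bigl(\tfrac{n^{m/2}q^{ns/2}}{\sqrt{1-q^{n}}}\bigr)\bigl(\tfrac{n^{m/2}q^{nt/2}}{\sqrt{1-q^{n}}}\bigr)$ would instead establish the log-convexity statement $S\!\left(\tfrac{s+t}{2}\right)^{2}\le S(s)S(t)$, whose left-hand side involves $\psi_{p,q}^{(m)}$ evaluated at the midpoint rather than at $s+t$; that is the wrong target. The splitting above, combined with the elementary monotonicity $q^{2nx}\le q^{nx}$, is what delivers the stated form. It is worth noting that the parity of $m$ enters this argument only through the sign factor $(\ln q)^{2(m+1)}>0$, which holds for every $m$, so the same reasoning would in fact also cover the case of even $m$.
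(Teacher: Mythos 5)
Your proof is correct, but it follows a genuinely different route from the paper's. The paper argues directly from positivity and monotonicity: since $m$ is odd, $(\ln q)^{m+1}>0$, and the termwise estimate $q^{n(s+t)}=q^{ns}q^{nt}\le q^{ns}$ gives $0\le\psi^{(m)}_{p,q}(s+t)\le\psi^{(m)}_{p,q}(s)$ and, by symmetry, $0\le\psi^{(m)}_{p,q}(s+t)\le\psi^{(m)}_{p,q}(t)$; multiplying these two inequalities between nonnegative quantities yields the claim immediately. Your reduction to $S(s)S(t)\ge S(s+t)^2$ and the Cauchy--Schwarz step are both valid, but notice that the auxiliary bound you need afterwards, $q^{2nx}\le q^{nx}$, is exactly the same monotonicity fact that already gives $S(s+t)\le S(s)$ and $S(s+t)\le S(t)$ directly; together with $S>0$ those two bounds imply $S(s+t)^2\le S(s)S(t)$ with no appeal to Cauchy--Schwarz at all, so that step is a detour rather than the engine of the argument. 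What your write-up adds is worthwhile nonetheless: the observation that the symmetric splitting instead yields the log-convexity bound $S\!\left(\tfrac{s+t}{2}\right)^{2}\le S(s)S(t)$ is a nice aside, and your closing remark that the stated inequality actually holds for even $m$ as well is correct --- the paper's own argument also extends to that case once the two inequalities are flipped to $\psi^{(m)}_{p,q}(s)\le\psi^{(m)}_{p,q}(s+t)\le 0$, although the paper does not record this.
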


\begin{proof}
We proceed as follows.
\begin{align*}
\psi_{p,q}^{(m)}(s) - \psi_{p,q}^{(m)}(s+t)&= (\ln q)^{m+1}\sum_{n=1}^{p} \left[  \frac{n^{m}q^{ns}}{1-q^{n}} - \frac{n^{m}q^{n(s+t)}}{1-q^{n}} \right] \\
&=(\ln q)^{m+1}\sum_{n=1}^{p} \left[  \frac{n^{m}q^{ns} (1- q^{nt})}{1-q^{n}} \right]\geq0.  \, \text{(since $m$ is odd)}
\end{align*}
That implies,
\begin{equation*}
\psi_{p,q}^{(m)}(s) \geq \psi_{p,q}^{(m)}(s+t) \geq0.
\end{equation*}
Similarly we have,
\begin{equation*}
\psi_{p,q}^{(m)}(t) \geq \psi_{p,q}^{(m)}(s+t) \geq0.
\end{equation*}
Multiplying these inequalities yields the desired results. Thus,
\begin{equation*}
\psi_{p,q}^{(m)}(s)\psi_{p,q}^{(m)}(t)\geq \left[ \psi_{p,q}^{(m)}(s+t) \right]^2 .
\end{equation*}
\end{proof}

\section{Concluding Remarks}
\begin{remark}
If in inequalities ~(\ref{eqn:(p,q)-digamma-ineq-1}), ~(\ref{eqn:(p,q)-polygamma-ineq-1}), ~(\ref{eqn:(p,q)-polygamma-ineq-2}) and ~(\ref{eqn:(p,q)-polygamma-ineq-3})  we allow  $p\rightarrow \infty$ as  $q\rightarrow 1^{-}$, then we repectively recover the inequalities  ~(\ref{eqn:digamma-ineq}), ~(\ref{eqn:polygamma-ineq-1}), ~(\ref{eqn:polygamma-ineq-2}) and ~(\ref{eqn:polygamma-ineq-3}). We have thus generalized the earlier results as in  \cite{Mansour-Shabani-2009} and \cite{Sulaiman-2011}.  The $k$, $p$ and $q$ analogues of  ~(\ref{eqn:digamma-ineq}), ~(\ref{eqn:polygamma-ineq-1}) and ~(\ref{eqn:polygamma-ineq-2}) can be found in the papers \cite{Nantomah-Prempeh-2014a}, \cite{Nantomah-Prempeh-2014b} and \cite{Nantomah-Prempeh-2014c}. Also, the $(q,k)$-analogues of  ~(\ref{eqn:(p,q)-digamma-ineq-1}), ~(\ref{eqn:(p,q)-polygamma-ineq-1}), ~(\ref{eqn:(p,q)-polygamma-ineq-2}) and ~(\ref{eqn:(p,q)-polygamma-ineq-3}) can be found in \cite{Kwara-Nantomah-2014}. \\
\end{remark}

\subsection*{Acknowledgments}
 The authors would like to thank the anonymous referee for his/her comments that helped us improve this article.

\bibliographystyle{plain}


\end{document}